\documentclass{amsart}
\usepackage{amssymb,bbm,color,xcolor}
\usepackage{amscd}
\usepackage{enumerate}
\usepackage{siunitx}
\usepackage{bm}
\usepackage{ulem}
\usepackage{float, graphicx}
\usepackage{subfigure}
\numberwithin{equation}{section}

\usepackage[pagebackref]{hyperref}

\usepackage{mathtools}
\usepackage[tableposition=top]{caption}
\usepackage{booktabs,dcolumn}

\DeclareFontFamily{OT1}{rsfs}{}
\DeclareFontShape{OT1}{rsfs}{n}{it}{<-> rsfs10}{}
\DeclareMathAlphabet{\mathscr}{OT1}{rsfs}{n}{it}

\theoremstyle{plain}

\newtheorem{theorem}{Theorem}[section]

\newtheorem{lem}[theorem]{Lemma}

\theoremstyle{definition}



\newcommand{\bal}{\[\begin{aligned}}
\newcommand{\eal}{\end{aligned}\]}
\newcommand{\beeq}{\begin{equation}}\newcommand{\eneq}{\end{equation}}
\newcommand{\beqa}{\begin{eqnarray*}}\newcommand{\eeqa}{\end{eqnarray*}}
\newcommand{\beeqa}{\begin{eqnarray}}\newcommand{\eneqa}{\end{eqnarray}}




\def\<{\langle}             \def\>{\rangle}
\newcommand{\al}{\alpha}    \newcommand{\be}{\beta}
  \newcommand{\vep}{\varepsilon}
  \newcommand{\La}{\Lambda}
    \newcommand{\la}{\lambda}
\newcommand{\sig}{\sigma}  
  \newcommand{\Si}{\Sigma}
\newcommand{\om}{\omega}    
\newcommand{\gam}{\gamma}
\newcommand{\R}{\mathbb{R}}

\newcommand{\Sp}{\mathbb{S}}
\newcommand{\ms}{\mathbb{S}}
\newcommand{\pt}{\partial_t}
\newcommand{\les}{{\lesssim}}

\newcommand{\hn}{\mathbb{H}^n}
\newcommand{\hth}{\mathbb{H}^3}

\newcommand{\De}{\Delta}
\newcommand{\de}{\delta}

\newcommand{\hf}{\frac{1}{2}}

\newcommand{\gm}{\Gamma^{-}(O,t_0)}
\newcommand{\gp}{\Gamma^{+}(O,t_0)}

\newcommand{\one}{\uppercase\expandafter{\romannumeral1}}
\newcommand{\two}{\uppercase\expandafter{\romannumeral2}}
\newcommand{\three}{\uppercase\expandafter{\romannumeral3}}
\newcommand{\four}{\uppercase\expandafter{\romannumeral4}}
\newcommand{\five}{\uppercase\expandafter{\romannumeral5}}
\newcommand{\six}{\uppercase\expandafter{\romannumeral6}}
\newcommand{\sev}{\uppercase\expandafter{\romannumeral7}}
\newcommand{\eig}{\uppercase\expandafter{\romannumeral8}}
\newcommand{\wt}{\widetilde}
\newcommand{\ol}{\overline}

\title
[Wave equations with logarithmic nonlinearity]{
Wave equations with logarithmic nonlinearity on hyperbolic spaces
}

\author{Chengbo Wang}
\address{School of Mathematical Sciences\\ Zhejiang University\\Hangzhou 310058, P. R. China}\email{wangcbo@zju.edu.cn }

\author{Xiaoran Zhang$^{*}$}\thanks{* Corresponding author}
\address{School of Mathematical Sciences\\ Zhejiang University\\ Hangzhou 310058,P.R.China}
\email{1025391337@qq.com}

\thanks{The authors were supported by
 NSFC 11971428  and  NSFC 12141102. }

\date{\today}

\begin{document}

\bibliographystyle{plain}

\begin{abstract}
In light of the exponential decay of solutions of linear wave equations on hyperbolic spaces $\hn$, to illustrate the critical nature, we investigate nonlinear wave equations with logarithmic nonlinearity, which behaves like
$\left(\ln {1}/{|u|}\right)^{1-p}|u|$ near $u=0$, 
on hyperbolic spaces.
Concerning the global existence vs blow up with small data, we expect that the problem admits a critical power $p_c(n)>1$.
When $n=3$,
we prove that the critical power is $3$, by proving global existence for $p>3$, as well as generically blow up for $p\in (1,3)$.
\end{abstract}

\keywords{Strauss conjecture; shifted wave; hyperbolic spaces; logarithmic nonlinearity.}

\subjclass[2010]{58J45, 35L05, 35L71, 35B44, 35B33}

\maketitle

\section{Introduction}

Let $n\ge 2$, consider the wave equation
\beeq\label{nlw}
\begin{cases}
\partial_{t}^2 u-(\De_{\hn}+\rho^2)u=F(u)\ ,\\
u(0,x)=\vep u_0, u_t(0,x)=\vep u_1\ ,
\end{cases}
\eneq
where $\rho=\frac{n-1}{2}$ (recall that the spectrum of $-\De_{\hn}$ is $[\rho^2,\infty)$), $u_0, u_1$ are smooth functions with compact support. As is well known, the global existence vs blow-up for nonlinear wave equations with power-type nonlinearities $F(u)\sim |u|^p$ is related to the so-called $Strauss\ conjecture$ in $\R^n$, which has a critical power $p_c(n)>1$. Correspondingly on hyperbolic spaces  $\hn$, it is known to admit global solutions for sufficiently small $\vep>0$, for any power $p\in (1, 1+4/(n-1))$, thanks to the improved 
 decay of solutions of linear wave equations.  In some sense, in handling the power nonlinearity, we do not need to explore the precise information on the decay rate and no critical phenomenon appears.

To capture the critical nature, in this paper, we propose 
the investigation of nonlinear wave equations with logarithmic nonlinearities $F_p(u)$ near $u=0$, for which we expect to have a critical power $p_c(n)>1$.

The interest arises from the similar equation in Euclidean spaces
\beeq\label{esnlw}
\begin{cases}
\partial_{t}^2 u-\De_{\R^n}u= |u|^p\ ,\\
u(0,x)=\vep u_0,\ u_t(0,x)=\vep u_1.
\end{cases}
\eneq
It has been studied for a long time and admits a critical power $p=p_c(n)>1$, such that for any compactly supported initial data
with sufficiently small size ($\vep\ll 1$), a regular global solution exists when $p>p_c(n)$, while such a result fails when $1<p<p_c(n)$. The first work in this direction is \cite{MR535704} in 1979 when $n=3$, where F. John determined the critical power $p_c(3)=1+\sqrt{2}$. Then Strauss \cite{MR614228} conjectured that the critical power $p_c(n)$ for other dimensions $n\ge2$ should be the positive root of the quadratic equation
$$(n-1)p^2-(n+1)p-2=0.$$
The conjecture was verified in Glassey \cite{MR631631}, \cite{MR618199} when $n=2$ with $p_c(2)=(3+\sqrt{17})/2$. Then for other dimensions, the existence portion of the conjecture was proved by Zhou \cite{MR1331521} ($n=4$), Lindblad-Sogge \cite{MR1408499} ($n\le8$) and Georgiev-Lindblad-Sogge \cite{MR1481816}, Tataru \cite{MR1804518} (all $n$, $p_c(n)<p\le p_{conf}$), where
$$p_{conf}(n)=1+\frac{4}{n-1}$$
is the conformal power. While the blow-up portion is due to Sideris \cite{MR744303} ($n\ge4$, $1<p<p_c(n)$).

On hyperbolic spaces $\hn$, in
geodesic polar coordinates, the metric is given by $g_{\hn}=dr^2+(\sinh r)^2d\om^2$, for $(r,\omega)\in (0,\infty)\times\Sp^{n-1}$,
 see Section \ref{pre}. If we take the same power-type nonlinearities $F(u)= |u|^p$, heuristically, we expect that small data global existences always hold for all $p>1$, 
 due to the $\sinh r$ factor in the metric. It is first proved by Fontaine \cite{MR1443052} in 1997 when $n=2,3$ from the perspective of Lie algebra, for any data $u_0\in C^1(\hn),u_1\in C(\hn)$ satisfying
\beeqa\label{data}
|u_0|+|u_1|+|\nabla u_0|\le  \theta_k\ , 
\eneqa
where $\nabla f=(\partial_r f\partial_r,(\frac{1}{\sinh r})^2\partial^{\om}f\partial_{\om})$, 
$|\nabla f|=((\partial_r f)^2+|\frac{\partial_{\om}f}{\sinh r}|^2)^{1/2}$ and $\theta_k =(\cosh |\cdot|)^{-k-\rho}$, $k>0$. 


For general spatial dimensions $n\ge 2$, 
Anker-Pierfelice-Vallarino \cite{MR2902129} 
proved the improved (polynomial) dispersive and Strichartz estimates, which is strong enough to imply global results for $1<p\le p_{conf}(n)$, even though such results have not been stated explicitly. Based on Tataru's (exponential) dispersive estimates \cite{MR1804518},
the global results for  $1<p\le p_{conf}(n)$ were explicitly stated and proved by
 Sire-Sogge-Wang \cite{MR4026182}.
 An alternative proof of Tataru's dispersive estimates is available in
 Sire-Sogge-Wang \cite{MR4026182} for dimension three and the authors \cite{WZ22} for general spatial dimension $n\ge 2$. 
The nonshifted wave equations (with 
$\partial_{t}^2 u-\De_{\hn}$
instead of
$\partial_{t}^2 u-(\De_{\hn}+\rho^2)$)
 have also been investigated in
 Metcalfe-Taylor
\cite{MR2775816}, \cite{MR2944727}, and Anker-Pierfelice \cite{MR3254350}. 
 See also Anker-Pierfelice-Vallarino
\cite{MR3345662} for similar results on
 {D}amek-{R}icci spaces, as well as the recent works of Sire-Sogge-Wang-Zhang \cite{MR4169670}, \cite{MR4432952} for similar results on asymptotically hyperbolic manifolds. All these results show that the critical power is $p_c=1$, or we can say that there is no critical powers on hyperbolic spaces with power-type nonlinearities.


Thanks to the $\sinh r$ factor in the metric,
 we expect exponential decay of (linear) solutions, see, e.g., \cite{MR1443052}, \cite{MR1804518}, \cite{WZ22}  or
  Lemma \ref{formulae}. More precisely, by \eqref{radialsolu} in the appendix,
we are convinced that,
for smooth data with compact support,
 the linear solution behaves like $(\sinh t)^{-\rho}\sim e^{-\rho t}$ near the light cone $t=r$ as $t$ goes to infinity, at least when $n=3$. 
In light of the exponential decay of linear solutions, to illustrate the critical nature, 
it is natural to introduce the logarithmic nonlinearity, 
 which behaves like $\left(\ln {1}/{|u|}\right)^{1-p}|u|$ near $u=0$, for some $p>1$.
 One typical example
is \beeq\label{eq-nlterm}
F_p(u)=\left(\sinh^{-1} \frac{1}{|u|}\right)^{-(p-1)}|u|\ ,\eneq 
which behaves like
 $(\ln\frac{1}{|u|})^{1-p}|u|$ for small $|u|$ and $|u|^{p}$ for large $|u|$.

Concerning the problem of
global existence vs blow up for the
 Cauchy problem \eqref{nlw} with $F=F_p(u)$,
we expect there exist a critical power $p_c(n)>1$ and it is interesting to  determine the critical power $p_c(n)$ for any $n\ge 2$.


In this paper, we will concentrate on the physical case $n=3$. At first, concerning the problem of global existence with small data, we need only to assume the behavior of $F_p$ near $0$, that is, 
$F_p\in C^1$, $F_p(0)=F'_p(0)=0$, and
\beeqa\label{nonlinear}
|F_p'(u)|\lesssim \left(\ln\frac{1}{|u|}\right)^{1-p},\ \forall 0<|u|\ll 1\ .
\eneqa
Our first main result is the following.

\begin{theorem}[Global existence]\label{thm-sdge}
Let $n=3$ and $p>3$. Considering \eqref{nlw} with 
$F(u)=F_p(u)$ satisfying \eqref{nonlinear},
there exists $\vep_0(p)>0$ so that
the problem admits a global weak solution $u\in C(\R\times \hth)$ for any $|\vep|<\vep_0$
and initial data $(u_0, u_1)\in C^1(\hth) \times C(\hth)$
 satisfying \eqref{data}.
\end{theorem}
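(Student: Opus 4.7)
The natural plan is to run a Picard iteration in a pointwise weighted $L^\infty$ norm adapted to the exponential dispersive decay of linear shifted waves on $\hth$, along the lines of Fontaine's approach \cite{MR1443052} for the power-nonlinearity case. Sharp Huygens holds for $\pa_t^2-(\De_{\hth}+1)$ in dimension three, and the classical substitution $v=\sinh(r)u$ reduces the radial problem to the free one-dimensional wave equation, producing a Kirchhoff-type representation; spherical means extend this to the nonradial case. The new wrinkle compared with the power case $F(u)=|u|^p$ is that $F_p(u)$ is only logarithmically smaller than $|u|$, so all of the smallness required to close the iteration has to be extracted from the $(\ln(1/|u|))^{1-p}$ factor in \eqref{nonlinear}.

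Concretely, I would first establish a linear pointwise bound $|u_L(t,x)|\le C_0\vep\,\Psi(t,x)$ for the solution $u_L=W(t)(u_0,u_1)$ of the homogeneous equation with data satisfying \eqref{data}, for an explicit weight $\Psi(t,x)$ that carries the $(\cosh|x|)^{-k-1}$ spatial decay inherited from the data together with the $(\sinh t)^{-1}\sim e^{-t}$ light-cone decay. The key qualitative feature of $\Psi$ is that $\ln(1/\Psi(t,x))\gtrsim 1+t$ uniformly in $x$, so that under the ansatz $|u|\le M\vep\,\Psi$ the assumption \eqref{nonlinear} yields the pointwise nonlinear bound
\[
|F_p(u)(s,y)|\;\les\; M\vep\,\Psi(s,y)\,(1+s)^{1-p}.
\]
I then iterate $u^{(0)}\equiv 0$, $u^{(N+1)}=u_L+\mathcal{D}F_p(u^{(N)})$ with $\mathcal{D}$ the Duhamel operator for $\pa_t^2-(\De_{\hth}+1)$, aiming for a fixed point inside the ball $\{u:|u|\le M\vep\,\Psi\}$ for $M$ large and $\vep<\vep_0(p)$ small.

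The main work is the nonlinear closure: showing that $\mathcal{D}F_p(u^{(N)})$ is bounded by $\tfrac{1}{2}M\vep\,\Psi$. Combining the Duhamel formula, sharp Huygens, and the hyperbolic spherical-mean identity $\int_{S(x,\tau)}f(|y|)\,d\sigma=\frac{2\pi\sinh\tau}{\sinh|x|}\int_{||x|-\tau|}^{|x|+\tau}f(u)\sinh u\,du$, this reduces to a two-dimensional integral bound in $(s,u)\in[0,t]\times[||x|-(t-s)|,|x|+(t-s)]$. The hardest step, and precisely where the assumption $p>3$ enters, is to balance the polynomial gain $(1+s)^{1-p}$ from the logarithmic nonlinearity against the combined growth produced by the time integration, the spatial integration over $S(x,t-s)$, and the $1/\sinh|x|$ singularity of the spherical-mean formula near the spatial origin; the resulting threshold should be consistent with the conformal exponent $p_{conf}(3)=3$ that controls the power-nonlinearity case. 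Once this closes, the contraction estimate for $u^{(N+1)}-u^{(N)}$ follows via the mean-value inequality derived from \eqref{nonlinear}, delivering a limit $u\in C(\R\times\hth)$ that solves \eqref{nlw} in the weak sense.
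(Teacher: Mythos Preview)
Your overall strategy---weighted $L^\infty$ iteration via the Kirchhoff/spherical-mean representation on $\hth$---matches the paper's, but the mechanism you propose for extracting smallness from the logarithmic nonlinearity has a real gap.

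The assertion that $\ln(1/\Psi(t,x))\gtrsim 1+t$ uniformly in $x$, so that \eqref{nonlinear} yields $|F_p(u)(s,y)|\lesssim M\vep\,\Psi(s,y)(1+s)^{1-p}$, is incompatible with any weight for which the Duhamel iteration closes. If $\Psi$ carries exponential decay in $|t-r|$ (which is what your bound forces at $r=0$, since there one needs $\Psi(t,0)\lesssim e^{-ct}$), then the Duhamel output decays only \emph{polynomially} in $t-r$: feeding $(1+s)^{1-p}\Psi$ through the representation and passing to $\alpha=s-\lambda$, $\beta=s+\lambda$, one finds for $1\lesssim r\le t$ (after localizing to $|\alpha|\lesssim 1$ where $(\cosh\alpha)^{-k}$ concentrates)
\[
(L F_p(u))(t,r)\ \lesssim\ \frac{1}{\sinh r}\int_{t-r}^{t+r}\!\!\int_{-\beta}^{t-r}(1+\beta)^{1-p}(\cosh\alpha)^{-k}\,d\alpha\,d\beta\ \sim\ e^{-r}(1+t-r)^{2-p},
\]
which never recovers $(\cosh(t-r))^{-k}$. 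Conversely, if you downgrade to the polynomial weight $\Psi=e^{-r}\<t-r\>^{-h}$ that \emph{does} close, then at $r=0$ one only has $\ln(1/\Psi)\sim h\ln\<t\>$, and your temporal bound $(1+s)^{1-p}$ is simply false deep inside the light cone. A further warning sign: with the factor $(1+s)^{1-p}$ the only visible threshold is $p>2$ (integrability in $s$), not $p>3$.

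The paper resolves this by extracting the logarithmic gain \emph{spatially} rather than temporally. From $\Phi_h=e^r\<t-r\>^h\ge e^r$ one gets $\ln(1/|u|)\ge \ln\frac{1}{2\vep N_h}+|y|$, so the Duhamel integrand carries $\bigl(\ln\frac{1}{\vep}+\lambda\bigr)^{1-p}=\bigl(\ln\frac{1}{\vep}+\tfrac{\beta-\alpha}{2}\bigr)^{1-p}$ rather than $(1+s)^{1-p}$. It is this spatial factor, integrated over the triangle $\{|t-r|\le\beta\le t+r,\ -\beta\le\alpha\le t-r\}$, that produces the genuine threshold $p>3$: one needs $h\in(1,p-2)$, the lower bound for integrability of $\<\alpha\>^{-h}$ and the upper bound so that $\int_{t-r}^{t+r}\bigl(\ln\frac{1}{\vep}+\beta-(t-r)\bigr)^{2-p}d\beta\lesssim(\ln\frac{1}{\vep})^{3-p}\ll 1$.
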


To determine the critical power $p_c(3)$, we consider the problem of blow up for relatively small powers. It turns out that $p_c(3)=3$ for
\eqref{nlw} with \eqref{eq-nlterm}, which is ensured by the following blow up result.

\begin{theorem}[Formation of singularity]\label{thm-bu}
Let $1<p<3$.
Considering \eqref{nlw} with compactly supported 
$C^1\times C$ data and
$F(u)=F_p(u)$ given by \eqref{eq-nlterm},
then the only global solution is the trivial solution.
In other words,
for any nontrivial data $(u_0,u_1)\in C_c^1\times C_c$
and arbitrary $\vep>0$, the corresponding weak solution will blow up in finite time.
\end{theorem}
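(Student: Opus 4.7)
The plan is to argue by contradiction: assume a nontrivial global weak solution $u\in C(\R\times\hth)$ exists and construct a pointwise lower bound on $|u|$ that becomes infinite at a finite time. As a preliminary reduction, by spherical averaging and the non-negativity of the radial fundamental solution of the shifted wave equation on $\hth$, it suffices to consider radial initial data with $u_0,u_1\ge 0$, so that the solution $u(t,r)\ge 0$ is radial and it is enough to rule out global existence for nontrivial, positive radial data.

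The second step is the classical one-dimensional reduction. Setting $v(t,r)=\sinh(r)\,u(t,r)$, the radial shifted wave equation becomes
\[
v_{tt}-v_{rr}=\sinh(r)\,F_p(v/\sinh r),\qquad r\ge 0,\quad v(t,0)=0,
\]
which, after odd extension in $r$, is a 1D Cauchy problem on $\R$. In the regime where $v$ is bounded and $r$ is large (so that $u=v/\sinh r$ is exponentially small), the identity $\sinh^{-1}(1/|u|)\sim r$ gives the key asymptotic
\[
\sinh(r)\,F_p(v/\sinh r)\sim v\cdot r^{1-p}.
\]
Thus near the outgoing light cone the 1D model is an inhomogeneous wave equation with an $r^{1-p}$-weighted source.

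The heart of the proof is a John-type iteration applied to this 1D problem. From D'Alembert's formula, the linear solution satisfies $v_{\mathrm{lin}}(t,r)\gtrsim \vep$ on a strip near the outgoing light cone $r\in[t,t+R_0]$ for any nontrivial positive data. Plugging this into the 1D Duhamel formula and using the asymptotic above, each iteration multiplies the lower bound by an additional factor of order $t^{3-p}$, since the key spacetime integral
\[
\int_0^t\int_{\text{backward cone from }(t,r)} s^{1-p}\,dy\,ds\ \sim\ t^{3-p}
\]
controls the per-step gain. This identifies $p_c(3)=3$ as the borderline exponent: for $p<3$ the gain is strictly positive and the iteration yields $v(t,r)\gtrsim C_n\,t^{a_n}$ on a suitable region, with exponents $a_n=a_0+n(3-p)\to\infty$.

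The principal obstacle, and the most delicate part of the argument, is to upgrade these ever-improving polynomial-in-$t$ lower bounds to a \emph{finite-time} singularity rather than merely infinite-time growth. I would address this in two stages. First, a careful bookkeeping of the iteration constants $C_n$ (in the spirit of John--Sideris) shows that the factorial losses from the $n$-fold Duhamel integration are compensated by the increasing exponents $a_n$, so that the iteration survives. Second, once $v$ is large enough, the ratio $u=v/\sinh r$ exceeds $1$ in a neighbourhood of $r=0$, and $F_p(u)$ transitions to its genuinely superlinear regime $F_p(u)\sim|u|^p$; here the iteration self-amplifies geometrically and a standard Glassey--Sideris ODE argument, applied to a weighted radial average such as $G(t)=\int u(t,x)\cdot (r/\sinh r)\,dV_{\hth}$ (noting that $(\De_{\hth}+1)(r/\sinh r)=0$), forces $\|u(t^*)\|_{L^\infty}=+\infty$ at some finite $t^*$. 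This contradicts the assumption $u\in C(\R\times\hth)$ and completes the proof.
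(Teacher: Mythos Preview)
Your outline captures the correct heuristic---the per-step gain in a John-type iteration is indeed $t^{3-p}$, which singles out $p_c(3)=3$---but the argument contains a genuine gap at the very first step.

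The reduction ``by spherical averaging and the non-negativity of the radial fundamental solution $\ldots$ it suffices to consider radial initial data with $u_0,u_1\ge 0$'' is not justified. Spherical averaging about the origin, together with convexity of $F_p$ and Jensen's inequality, gives only the \emph{inequality}
\[
\wt u(t,r)\ \ge\ \wt{u^0}(t,r)\ +\ LF_p(\wt u)(t,r),
\]
for the averaged solution $\wt u$; it does not let you replace the original data by nonnegative data. The averaged data $\wt{u_0},\wt{u_1}$ may change sign or even be identically zero (think of an odd function), so your claim that ``$v_{\mathrm{lin}}(t,r)\gtrsim\vep$ on a strip near the outgoing light cone'' has no basis for general nontrivial data. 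Since the theorem asserts blow up for \emph{every} nontrivial $(u_0,u_1)\in C^1_c\times C_c$, this is not a cosmetic issue: without a seed of positivity, the iteration never starts.

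The paper circumvents this by a different mechanism. It uses the sharp Huygens principle on $\hth$ to note that $u^0\equiv 0$ inside the forward cone $\Gamma^+(O,t_0)$, so the linear part plays no role there. The seed of positivity then comes from the nonlinearity itself: if the (assumed global) solution fails to be supported in $\Gamma^-(O,t_0)$, there is a point $(x_1,t_1)$ with $u(t_1,x_1)\neq 0$, hence $F_p(u)>0$ nearby, and the Duhamel integral forces $u(t_2,O)>\mu>0$ at some later time $t_2$. From this single positive value the iteration proceeds entirely inside $\Gamma^+(O,t_0)$ (where $\wt{u^0}=0$), first with the logarithmic regime of $F_p$ to push the exponent of $t-r$ above the exponent of $t+r$, and then with the power regime $F_p(u)\gtrsim|u|^q$ for large $u$ to force a contradiction at a fixed finite time. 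No sign assumption on the data is ever needed.

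Your endgame via the test function $r/\sinh r$ and a Glassey--Sideris ODE is a reasonable alternative to the paper's purely pointwise second iteration, and your computation $(\De_{\hth}+1)(r/\sinh r)=0$ is correct. But to make the whole proof go through you must replace the unjustified positivity reduction by the Huygens-based argument above (or some equivalent device that manufactures a positive seed from \emph{arbitrary} nontrivial data).
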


Actually, similar to the global result, our proof could be adapted for general nonlinearities: we assume $F$ is a convex $C^1(\R)$ function so that,
$F(0)=F'(0)=0$,
\beeq\label{nonlinearity}
\left\{\begin{array}{l}
F(u)\gtrsim \left(\ln\frac{1}{|u|}\right)^{1-p}|u|,\ |u|\ll 1\ ,\\
F(u)\gtrsim |u|^{q},\ |u|\gtrsim 1\ ,
\end{array}
\right.
\eneq
 for some $q>1$ and $p\in (1,3)$.

At last, we would like to discuss some further problems, before concluding the introduction. 
Concerning the problem
\eqref{nlw} with \eqref{eq-nlterm},
the first natural problem is to determine the critical powers $p_c(n)$ for $n\neq 3$. For this problem, heuristically, in view of the sharp linear decay of the form $(\sinh t)^{-\rho}\sim e^{-\rho t}$ ($t>1$), we expect similar asymptotic behavior 
$u(t,x)\sim e^{-\rho t}$, along the light cone,
for $p>p_c(n)$, from which the nonlinear problem  \eqref{nlw}  is expected to behave like 
$$|\partial_{t}^2 u-(\De_{\hn}+\rho^2)u|=|F(u)|\les \<t\>^{-(p-1)} |u|\ .$$
Viewing the multiplication operator $\<t\>^{-(p-1)}$ as a short range perturbation of the operator $\pt^2$, it seems natural to
conjecture that $p_c(n)$ is precisely $3$, regardless of the spatial dimension. More precisely,
we conjecture that there exists
$\de=\de(n)>0$ so that 
we have global existence, with small data, for any $p\in (3, 3+\de(n))$, while for
$p\in (1,3)$, there exist
 some data $(u_0, u_1)$ so that there is no global solutions for any $\vep>0$.
 For the case with $1<p<p_c(n)$, besides the blow up results, it is also interesting to determining the sharp lifespan, in terms of $\vep$. Furthermore, the more challenging problem may be to understand the critical behavior when $p=p_c(n)$.

\subsection*{Organization of this paper} Our paper is organized as follows. We recall 
the fundamental dispersive estimate for the linear solution $u^0$ in Section 2. In Section 3, we prove the global existence by iteration, for any $p>3$, by exploiting the dispersive estimate.
The result for the formation of singularity, Theorem \ref{thm-bu}, is presented in Section 4, for which we closely follow the idea of John \cite{MR535704}. Finally, 
in the appendix, we present an elementary proof for the solution representation formula \eqref{mean}.

\subsection*{Notation} 
\begin{itemize}
\item
We use $A\lesssim B$ to denote
$A\leq CB$ for some large constant C which may vary from line to
line and depend on various parameters, and similarly we use $A\ll B$
to denote $A\leq C^{-1} B$. We employ $A\sim B$ when $A\lesssim
B\lesssim A$.
\item $d(x,y)$ is the geodesic distance between $x,y$ in $\hth$, and if $x$ is the origin $O$, we denote $|y|=d(O,y)$.
\item $S_t(x):=\{y\in\hth,d(x,y)=t\}$ denotes the hyperbolic sphere with center $x$ and radius $t$.
\item $M^r f(x)=\frac{1}{|S_r(x)|}\int_{S_r(x)}f(y)d\sig_y$ denotes the spherical mean of $f$ over $S_r(x)$. If the center is the origin $O$, for any function $w(t,x)$ with parameter $t$, we simply denote $\wt{w}(t,r)
:=M^r(w(t, \cdot))(O)$. 
\item For $(O,t_0)\in \hth\times\R$, we denote the forward and backward cones with vertex $(O,t_0)$ by
$$\Gamma^{\pm}(O,t_0)=\{(x,t): d(O,x)\le \pm (t-t_0)\}\ .$$
\end{itemize}

\section{Preliminary}\label{pre}

Inside the forward light cone of the Minkowski space $\La=\{(\tau,z)\in \R^{1,n}: |z|<\tau\}$, we introduce coordinates 
\beeq\label{varitrans}
s=|z|,\ \tau=e^t\cosh r,\ s=e^t\sinh r,\ r\in[0,\infty),\ t\in\R.
\eneq
Viewing $\hn$ as
the embedded spacelike hypersurface with $t=0$, we have
the natural metric $g_{\hn}=dr^2+(\sinh r)^2d\om^2$, induced from the Minkowski metric $g=-d\tau^2+dz^2=-d\tau^2+ds^2+s^2d\om^2$, where  $\om\in\ms^{n-1}$. This illustrates that $(r,\om)$ is the natural geodesic polar coordinates in $\hn$.

Considering the linear wave equations \beeq\label{wave}
\begin{cases}
\partial_{t}^2 u-(\De_{\hn}+\rho^2)u=F\ , x\in\hn\ ,\\
u(0,x)= u_0, u_t(0,x)= u_1\ ,
\end{cases}
\eneq
Duhamel's principle tells us that
 \eqref{wave} is equivalent to the integral equation
\beeq\label{ie}
u(t,x)= u^0(t,x)+(LF)(t,x)= u^0(t,x)+\int_0^t I(\tau, x, F(t-\tau))d\tau,
\eneq
where $u^0= \pt I(t,x, u_0)+  I(t,x,u_1)$, and $I(t,x,u_1)$ is the solution for the linear homogeneous equation with data $u_0=0$.

In the proof of Theorem \ref{thm-sdge},
a fundamental result to be used is the following a priori dispersive estimate for the linear solution $u^0$, which is available in G\"{u}nther \cite[Theorem 3.1]{MR1295101} or Fontaine \cite[Theorem 6]{MR1443052}.
For completeness, we present a proof.

\begin{lem}[Linear estimates]\label{formulae}
Let $n=3$ and $k>0$,
there exists $N_k>1$ so that
we have the estimate 
\beeq\label{eq-linear}
|u^0(t,x)|\le  N_k(\cosh |x|)^{-1}(\cosh (|t|-|x|))^{-k}\ ,
\eneq
for any solutions to  \eqref{wave} with $F=0$, whenever the  initial data $(u_0, u_1)\in C^1\times C$ satisfying \eqref{data}.
\end{lem}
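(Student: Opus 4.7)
By time-reversal symmetry I may assume $t>0$. The plan has three steps: represent $u^0$ via spherical means on $\hth$, bound each term pointwise by the spherical mean of the radial majorant $\theta_k(y):=(\cosh|y|)^{-k-1}$ of the data, and then compute that mean explicitly using the hyperbolic law of cosines.

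\emph{Step 1: representation.} For $f\in C(\hth)$, set $w(r,x):=\sinh r\cdot M^r f(x)$. The Darboux identity $(\pa_r^2+2\coth r\,\pa_r)M^r f=\De_{\hth}M^r f$ on $\hth$ yields $\pa_r^2 w=(\De_{\hth}+1)w$, and since $w(0,x)=0$, $\pa_r w(0,x)=f(x)$, this identifies $I(t,x,f)=\sinh t\cdot M^t f(x)$ (formula \eqref{mean} in the appendix). Consequently
$$u^0(t,x)=\cosh t\cdot M^t u_0(x)+\sinh t\,\pt M^t u_0(x)+\sinh t\cdot M^t u_1(x).$$
Parametrizing $S_t(x)$ by the exponential map $y=\exp_x(t\om)$, with $\om$ on the unit tangent sphere, gives $\pt M^t u_0(x)=(4\pi)^{-1}\int_{\Sp^2}\gamma'(t)\cdot(\nabla u_0)(\exp_x(t\om))\,d\om$ where $\gamma'(t)$ is a unit vector; in particular $|\pt M^t u_0(x)|\le M^t|\nabla u_0|(x)$. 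Inserting \eqref{data} yields
$$|u^0(t,x)|\le (\cosh t+2\sinh t)\,M^t\theta_k(x).$$

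\emph{Step 2: explicit bound on $M^t\theta_k$.} Since $\theta_k$ is radial, I parametrize $S_t(x)$ by the angle $\theta$ at $x$ between the geodesic from $x$ to $O$ and the geodesic from $x$ to the generic point $y$. The hyperbolic law of cosines $\cosh|y|=\cosh|x|\cosh t-\sinh|x|\sinh t\cos\theta$, combined with the substitution $s=\cosh|y|$, converts the angular average into a one-dimensional Lebesgue integral:
$$M^t\theta_k(x)=\frac{1}{2\sinh|x|\sinh t}\int_{\cosh(t-|x|)}^{\cosh(t+|x|)}s^{-k-1}\,ds.$$
Estimating the integral in two complementary ways gives
$$M^t\theta_k(x)\le \frac{(\cosh(t-|x|))^{-k}}{2k\sinh|x|\sinh t}\quad\text{and}\quad M^t\theta_k(x)\le (\cosh(t-|x|))^{-k-1};$$
the first from integrating $s^{-k-1}$ exactly and dropping the endpoint at $t+|x|$, the second from bounding the integrand by its value at the lower endpoint together with $\cosh(t+|x|)-\cosh(t-|x|)=2\sinh|x|\sinh t$.

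\emph{Step 3: assembly.} For $t,|x|\ge 1$ the first bound of Step 2, combined with $\coth t\le \coth 1$ and $\sinh|x|\ge\tfrac12\cosh|x|$, yields
$$|u^0(t,x)|\les k^{-1}(\cosh|x|)^{-1}(\cosh(t-|x|))^{-k}.$$
In the bounded complement $\{t\le 1\}\cup\{|x|\le 1\}$ the second bound gives $|u^0(t,x)|\les \cosh t\,(\cosh(t-|x|))^{-k-1}$, and the elementary identity $\cosh|x|\cosh t=\cosh(t-|x|)+\sinh|x|\sinh t$ (a cosine addition formula) combined with $\min(|x|,t)\le 1$ shows $\cosh|x|\cosh t\le C\cosh(t-|x|)$, converting this to the required form after enlarging $N_k$.

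\emph{Main obstacle.} The main bookkeeping nuisance is the apparent singularity of the first bound of Step 2 as $|x|\to 0$ (or $t\to 0$): the difference $(\cosh(t-|x|))^{-k}-(\cosh(t+|x|))^{-k}$ vanishes at a matching rate, and in fact $M^t\theta_k(O)=(\cosh t)^{-k-1}$, which together with the $\cosh t$ prefactor from Step 1 produces the correct on-axis decay $(\cosh t)^{-k}$. The two-region splitting in Step 3 bypasses the need for any delicate limiting argument.
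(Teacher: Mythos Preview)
Your proof is correct and follows essentially the same route as the paper: represent $u^0$ via spherical means, dominate the data by the radial majorant $\theta_k$, compute $M^t\theta_k$ explicitly, and then do a case split in $(t,|x|)$. The only notable difference is cosmetic: the paper proves the intermediate bound $M^t\theta_k(x)\lesssim(\cosh t\cosh|x|)^{-1}(\cosh(t-|x|))^{-k}$ via a three-region analysis (small $r$, small $t$, both large), whereas your two-region split using the simple ``max of integrand times length'' bound $M^t\theta_k\le(\cosh(t-|x|))^{-k-1}$ together with $\cosh|x|\cosh t\lesssim\cosh(t-|x|)$ on $\{t\le1\}\cup\{|x|\le1\}$ is a slightly cleaner way to handle the near-axis and short-time regimes.
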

\begin{proof}
Without loss of generality, we assume $t>0$. At first, we recall that 
$u^0(t,x)=\partial_tI(t, x, u_0)+I(t, x, u_1)$
on $(0,\infty)\times\hth $ with
\beeqa\label{mean}
I(t,x, u_1)=\sinh t\cdot (M^t u_1)(x),
\eneqa
which could be obtained 
from a relation between the wave operators on hyperbolic space and that on Minkowski space, see Appendix for a sketch of the proof.
 
As $|u_1(x)|\le \theta_k(x)=(\cosh |x|)^{-k-1}$, we have the following estimate with $|x|=r$
$$|I(t,x,u_1)|\le I(t, x,\theta_k)=\sinh t \cdot (M^t \theta_k)(x)\ ,$$
where
\beqa
(M^t \theta_k)(x)
&=&\frac{1}{2\sinh t \sinh r}\int_{|r-t|}^{r+t}(\cosh\la)^{-k-1}\sinh \la d\la\\
&=&\frac{(\cosh (t-r))^{-k}-(\cosh (t+r))^{-k}}{2k\sinh t \sinh r}\ .
\eeqa
We claim that we could prove an even better estimate for $|I(x,t,g)|$:
\beeq\label{eq-linear-claim}
|(M^t \theta_k)(x)|\le C_k \frac{1}{(\cosh t)(\cosh r) (\cosh (t-r))^{k}}\ .
\eneq

Before proving \eqref{eq-linear-claim}, let us check that it is strong enough to conclude \eqref{eq-linear}. Actually, when $u_0=0$, it is stronger than
\eqref{eq-linear}, due to the fact that $\tanh t\in [0, 1]$.
For the case with $u_1=0$, by \eqref{mean}, we see that
$$u^0=
\partial_t I(t, x, u_0)=(\cosh t) (M^t u_0)(x)+(\sinh t )\pt (M^t u_0)(x)\ ,
$$ for which it remains to control
$\pt (M^t u_0)(x)$.

To control $\pt (M^t u_0)(x)$, we introduce
a Lorentz boost $\psi_x\in SO(1,3)$ such that $\psi_x(O)=x$. It is known that $\psi_x S_t(O)=S_t(x)$ preserving the metric and,
for fixed $x, y$, $\psi_x (t,y)=(r_x(t,y),\om_x(t,y))=\gam_{x,y}(t)$ is a geodesic curve with
$|\pt\gam_{x,y}(t)|=1$. Then
\beqa
\partial_t(M^t u_0)(x) &=&\frac{1}{4\pi}\int_{S_1(O)} \partial_t(u_0(\psi_x(t,y))) d\sig_y\\
&=&\frac{1}{4\pi}\int_{S_1(O)} \partial_t(u_0(r_x(t,y),\om_x(t,y))) d\sig_y\\
&=&\frac{1}{4\pi}\int_{S_1(O)} \<\nabla u_0, \pt \gam_{x,y}(t)\>_{g_{\hth}} 
d\sig_y\ ,
\eeqa
and so
$$|\partial_t(M^t u_0)(x)|
\le\frac{1}{4\pi}\int_{S_1(O)} |\nabla u_0(\gam_{x,y}(t))|d\sig_y
\le M^t( \theta_k)(x)\ ,
$$
thanks to the assumption \eqref{data}.

To conclude the proof, we prove \eqref{eq-linear-claim}.
Actually, when $t\ge r$ and $r\ll1$, as
$\tanh \la$ is increasing and $\sinh r\ge r$, we have
\beqa
(M^t \theta_k)(x)
&=&\frac{1}{2\sinh t \sinh r}\int_{|r-t|}^{r+t}(\cosh\la)^{-k}\tanh \la d\la\\
&\le&\frac{\tanh (t+r)}{2 r \sinh t }\int_{t-r}^{t+r}(\cosh\la)^{-k} d\la\\
&\le&
\frac{\tanh (t+r)}{\sinh t }
 (\cosh (t-r))^{-k}\sim (\cosh t)^{-1}(\cosh r)^{-1}(\cosh (t-r))^{-k}.
\eeqa
Similarly, for $r>t$ and $t\ll1$, we obtain
\beqa
(M^t \theta_k)(x)
&=&\frac{1}{2\sinh t \sinh r}\int_{r-t}^{r+t}(\cosh\la)^{-k}\tanh \la d\la\\
&\le&\frac{\tanh (t+r)}{2 t \sinh r }\int_{r-t}^{t+r}(\cosh\la)^{-k} d\la\\
&\les&\frac{\tanh (t+r)}{ \sinh r }(\cosh (t-r))^{-k}\sim (\cosh t)^{-1}(\cosh r)^{-1}(\cosh (t-r))^{-k}\ .
\eeqa
Finally, for the remaining case with $r, t\gtrsim 1$, we have
Else, if $t\ge r\gtrsim 1$, it is clear that
\beqa
(M^t \theta_k)(x)&=&\frac{(\cosh (t-r))^{-k}-(\cosh (t+r))^{-k}}{2k\sinh t \sinh r}\\
&\le&\frac{(\cosh (t-r))^{-k}}{2k\sinh t \sinh r}
\sim (\cosh t)^{-1}(\cosh r)^{-1}(\cosh (t-r))^{-k}\ ,
\eeqa
which finish the proof of  \eqref{eq-linear-claim}.
\end{proof}


\section{Global existence} 

In this section, we give the proof of Theorem \ref{thm-sdge}, for which we rewrite \eqref{nlw} into the following integral equation
\beeq\label{ie2}
u(t,x)= \vep u^0(t,x)+(LF_p(u))(t,x)= \vep u^0(t,x)+\int_0^t I(\tau, x, F_p(u(t-\tau, \cdot))d\tau,
\eneq
where $u^0= \pt I(t,x, u_0)+  I(t,x,u_1)$ is the homogeneous solution with data $(u_0, u_1)$.

By Lemma \ref{formulae}, we have
\beqa
|u^0(t,x)|\le   N_k(\cosh |x|)^{-1}(\cosh (t-|x|))^{-k}\ ,
\eeqa
for any
$u_0\in C^1$, $u_1\in C$ satisfying \eqref{data}.

Let $\< t\>=\sqrt{1+t^2}$,
based on the elementary inequality
\beqa
\frac 12  e^{|t|}\le \cosh t\le e^{|t|},
\forall t\in\R\ ,
\eeqa
we observe that, for any $h>0$, there exists a constant $N_h>N_k$ such that
\beeqa\label{linear}
\overline{u^0}(t,r)\le  \frac{ N_k}{(\cosh r)(\cosh (t-r))^{k}}\le  
\frac{N_h}{e^{r}\<t-r\>^{h}}
:=\frac{  N_h}{\Phi_h(t,r)}\ ,
\eneqa
where we denote $\ol{f}(t,r)=\max_{|x|=r}|f(t,x)|$.

For fixed $h>0$ to be specified later,
the global existence of the solution $u$ of \eqref{ie2} will be proved by iteration, for which we have to introduce a suitable norm. We define the (complete) solution space and the 
solution map
 $T u$ as follows
$$X_{\vep} = \{u\in C([0,\infty)\times \hth): \|u\|:=\|\Phi_hu\|_{L^\infty_{t,x}}\le 2\vep N_h\}\ ,$$
$$(T u)(t,x)=\vep u^0+LF_p(u)\ .$$
Then 
 the proof is reduced to the following key nonlinear estimates,
in light of Banach's contraction principle.

\begin{lem}[Nonlinear estimates]\label{mainlem}
Let $p>3$, $h\in(1,p-2)$ and
$F_p$ be the $C^1$ function satisfying \eqref{nonlinear}. There exists $\vep_0>0$ so that
for any $\vep\in (0, \vep_0]$,
we have
\beeqa\label{compressed}
\|LF_p(u)-LF_p(v)\|\le \frac 12 \|u-v\|, \forall u, v\in X_\vep\ .
\eneqa
\end{lem}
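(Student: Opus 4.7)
The plan is to apply the mean value theorem to $F_p(u)-F_p(v)$, extract from the hypothesis \eqref{nonlinear} a ``logarithmic gain'' factor $(\ln 1/|\xi|)^{1-p}$, trade a small piece of this gain for a negative power of $\ln(1/\vep)$ via an elementary interpolation, and then close the iteration with a direct integral estimate based on the spherical-mean representation used in the proof of Lemma \ref{formulae}.

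Since $u, v \in X_\vep$, the mean value theorem produces $\xi$ with $|\xi|(s, y) \le 2\vep N_h / (e^{|y|}\langle s-|y|\rangle^h) \ll 1$ for $\vep$ small, so \eqref{nonlinear} applies and
\[
\ln\frac{1}{|\xi|}(s, y) \ge |y| + h\ln\langle s-|y|\rangle + C_\vep, \quad C_\vep := \ln\bigl(1/(2\vep N_h)\bigr).
\]
I would then fix $\delta \in (0, \min(p-3, p-2-h))$ (nonempty since $p > 3$ and $h < p-2$) and apply the elementary inequality $(A+B)^{p-1} \ge A^{p-1-\delta}B^{\delta}$ (valid for $A, B \ge 0$, obtained from $A+B \ge A$ and $A+B \ge B$) with $A = |y|+h\ln\langle s-|y|\rangle + 1$ and $B = C_\vep - 1$. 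Combining with $|F_p'(\xi)| \lesssim (\ln 1/|\xi|)^{1-p}$, the envelope $|u-v|(s,y) \le \|u-v\|/(e^{|y|}\langle s-|y|\rangle^h)$, and dropping the nonnegative $h\ln\langle s-|y|\rangle$ term, yields the pointwise bound
\[
|F_p(u) - F_p(v)|(s, y) \lesssim C_\vep^{-\delta}\|u-v\|\,\frac{(|y|+1)^{\delta-(p-1)}}{e^{|y|}\langle s-|y|\rangle^h}.
\]

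Because the right-hand side depends on $y$ only through $|y|$, the three-dimensional hyperbolic spherical-mean identity used in the proof of Lemma \ref{formulae} gives
\[
|L(F_p(u) - F_p(v))(t, x)| \lesssim \frac{C_\vep^{-\delta}\|u-v\|}{\sinh r}\,J(t, r), \quad r = |x|,
\]
where
\[
J(t, r) := \int_0^t\int_{|r-\tau|}^{r+\tau}\frac{(\lambda+1)^{\delta-(p-1)}\,e^{-\lambda}\sinh\lambda}{\langle t-\tau-\lambda\rangle^h}\,d\lambda\,d\tau.
\]
The heart of the argument, and the main obstacle, is verifying
\[
J(t, r) \le C'\,\frac{\sinh r\cdot e^{-r}}{\langle t-r\rangle^h}.
\]
I would handle this by first using $e^{-\lambda}\sinh\lambda \le 1/2$, then splitting the $(\tau, \lambda)$-region into $\tau \le r$ (where $\lambda \in [r-\tau, r+\tau]$, of length $2\tau$) and $\tau > r$ (where $\lambda \in [\tau-r, \tau+r]$, of length $2r$), and changing variables so that a new parameter $\sigma'$ measures the signed displacement of $\sigma := t-\tau-\lambda$ from the relevant endpoint of its subregion. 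The inner one-dimensional integral of $(\,\cdot\,)^{\delta-(p-1)}$ in $\tau$ converges uniformly in $r$ only when $\delta < p-3$, and the outer integration against $\langle\sigma\rangle^{-h}$ recovers the factor $\langle t-r\rangle^{-h}$ precisely when $\delta < p-2-h$; both thresholds are built into the choice of $\delta$. The regime of small $r$ absorbs the extra factor $\sinh r\cdot e^{-r} \sim r$ automatically, since the $\lambda$-range shrinks to length $2\min(r, \tau)$.

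Combining everything and multiplying through by $\Phi_h(t, r) = e^r\langle t-r\rangle^h$, we obtain $\|LF_p(u) - LF_p(v)\| \le K\,C_\vep^{-\delta}\|u-v\|$ for a constant $K$ depending only on $p$ and $h$. Since $C_\vep^{-\delta} = (\ln(1/(2\vep N_h)))^{-\delta} \to 0$ as $\vep \to 0^+$, there exists $\vep_0 > 0$ with $K\,C_\vep^{-\delta} \le 1/2$ for every $\vep \in (0, \vep_0]$, which is precisely \eqref{compressed}.
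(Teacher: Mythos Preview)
Your approach is correct and closely parallels the paper's. The one substantive variation is that you peel off the smallness in $\vep$ up front via the interpolation $(A+B)^{p-1}\ge A^{p-1-\delta}B^{\delta}$, obtaining a prefactor $C_\vep^{-\delta}$ and an $\vep$-independent integral with weight $(\lambda+1)^{\delta-(p-1)}$; the paper instead keeps the full factor $\bigl(\ln\tfrac{1}{2\vep N_h}+\lambda\bigr)^{1-p}$ inside the integrand and shows directly that the resulting integral is $\ll\langle t-r\rangle^{-h}\tanh r/\tanh(t+r)$ for small $\vep$. These are equivalent bookkeeping choices: your thresholds $\delta<p-3$ and $\delta<p-2-h$ are precisely the conditions that make the paper's Case~2 estimates \eqref{inte3}--\eqref{inte4} go through with $p$ replaced by $p-\delta$ and $\ln(1/\vep)$ replaced by a fixed constant, so that one obtains uniform rather than $o(1)$ bounds. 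For the integral estimate itself, the paper's argument is more explicit than your sketch: it passes to the null coordinates $\alpha=s-\lambda$, $\beta=s+\lambda$ (so that $\langle s-\lambda\rangle^{-h}$ becomes $\langle\alpha\rangle^{-h}$ and the logarithmic weight depends only on $\beta-\alpha$), then treats the three regimes $r\le\min(1,t)$, $1\le r\le t$, and $r>t$ separately, further splitting the $\alpha$-integration at $(t-r)/2$ in the first two. Your $\tau\lessgtr r$ decomposition and the not-fully-specified change to $(\sigma,\sigma')$ aim at the same computation; carrying them out in null coordinates would reproduce the paper's case analysis with no extra effort.
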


Actually, with the help of Lemma \ref{mainlem}, we 
know that for $u\in X_{\vep_0}$,
$$\|LF_p(u)\|=\|LF_p(u)-LF_p(0)\|\le \vep_0 N_h\ ,$$
which tells us that
$$\|Tu\|\le \|\vep u^0\|+\|LF_p(u)\|\le 2\vep_0 N_h\ ,$$
i.e., $Tu\in X_{\vep_0}$.
In addition,
by \eqref{compressed},
we have
$$\|Tu-Tv\|=\|LF_p(u)-LF_p(v)\|\le \frac 12 \|u-v\|\ ,$$
which ensures that
$T: X_{\vep_0}\to X_{\vep_0}$ is a contraction map, and the fixed point is the desired solution.

\subsection{Proof of Lemma \ref{mainlem}}

By \eqref{nonlinear}, there exists $A>2$ so that
\beeqa\label{nonlinear2}
|F_p'(u)|\le A \left(\ln \frac{1}{|u|}\right)^{1-p}\triangleq G(u), \forall |u|\le \frac 1 A\ .
\eneqa

For any $u\in X_\vep$, as $\Phi_h\ge 1$, we know that
$$\|u\|_{L^\infty_{t,x}}\le
\|\Phi_h u\|_{L^\infty_{t,x}}=
 \|u\|\le 2 \vep N_h\le \frac 1 A\ ,$$
provided that $\vep\in (0, 1/(2N_h A)]=(0, \vep_1]$, for which we assume in what follows.
Then for any $u, v\in X_\vep$,
in view of
\eqref{nonlinear2} and the monotonicity of $G$,  we get
$$|F_p(u)-F_p(v)|\le G(\max(|u|,|v|)) |u-v|\le
G\left(\frac{2\vep N_h}{\Phi_h}\right)  \ol{u-v} 
\ .
$$
Recall $\Phi_h(t,r)=e^{r}\<t-r\>^{h}\ge e^r$, we see that
$$|F_p(u)-F_p(v)(t,x)|\le A \left(\ln\frac{1}{2\vep N_h}+r\right)^{1-p}  \ol{u-v} (t,r)
\ .
$$

By \eqref{ie2}, \eqref{mean} and \eqref{eq-sph-ave},  as well as the fact
$$|\ol{u}(t,r)|\le \frac{\|u\|}{\Phi_h (t,r)}
\le\frac{\|u\|}{\<t-r\>^h\cosh r }
\ ,$$
we have 
\beqa
&&|(LF_p(u)-LF_p(u))(t,x)|\\
&\le&L \left(A \left(\ln\frac{1}{2\vep N_h}+\cdot\right)^{1-p}  \ol{u-v} \right)(t,r)\\
&=&\frac{A}{2\sinh r}\int_0^t\int_{|t-s-r|}^{t-s+r}\left(\ln\frac{1}{2\vep N_h}+\la\right)^{1-p}  \ol{u-v}(s, \la) \sinh \la  d\la ds\\
&\le&\frac{ A \|u-v\|}{2\sinh r}\int_0^t\int_{|t-s-r|}^{t-s+r}\left(\ln\frac{1}{2\vep N_h}+\la\right)^{1-p}\frac{\tanh \la}{\<s- \la\>^h} d\la ds \\
&\le&\frac{ A\tanh (t+r)}{2\sinh r}\|u-v\| \int_{|t-r|}^{t+r}\int_{-\be}^{t-r}\left(\ln\frac{1}{2\vep N_h}+\frac{\be-\al}{2}\right)^{1-p}\<\al\>^{-h}d\al d\be
\ ,
\eeqa
where we have introduced new variables of integration $\al=s-\la$, $\be=s+\la$.

With the help of the above estimate, the proof of
 \eqref{compressed} is then reduced to the proof of
  the following claim:
\beeq\label{claim}
J(t,r):=\int_{|t-r|}^{t+r}\int_{-\be}^{t-r}\left(\ln\frac{1}{\vep }+\be-\al\right)^{1-p}\<\al\>^{-h}d\al d\be\ll \frac{\tanh r}{\tanh (t+r)}\<t-r\>^{-h} ,\eneq
provided that
$p>3$, $h\in(1,p-2)$ and
 $\vep$ is sufficiently small.

Concerning \eqref{claim}, we divide the proof into 
three separate cases:
$r\le \min (1,t)$, $1\le r\le t$ and $r>t$.

\subsubsection{Case 1: $r\le \min (1,t)$}
In this case, we have $r\sim\sinh r\sim \tanh r$. As $h>1$, it is obvious that $\<\al\>^{-h}$ is integrable and so
\beeqa
\label{inte1}&&\int_{t-r}^{t+r}\int_{-\be}^{(t-r)/2}\left(\ln\frac{1}{\vep }+\be-\al\right)^{1-p}\<\al\>^{-h}d\al d\be\\
\nonumber&\les&\int_{t-r}^{t+r}\left(\ln\frac{1}{\vep }+\be\right)^{1-p}d\be\\
\nonumber&\les&r\left(\ln\frac{1}{\vep }+t-r\right)^{1-p}\ll r\<t-r\>^{-h},
\eneqa
thanks to the assumption $h\in(1,p-1)$. On the other hand, for the  part with $\al\ge (t-r)/2$, we have
\beeqa
\label{inte2}&&\int_{t-r}^{t+r}\int_{(t-r)/2}^{t-r}\left(\ln\frac{1}{\vep }+\be-\al\right)^{1-p}\<\al\>^{-h}d\al d\be\\
\nonumber&\sim&\<t-r\>^{-h}\int_{t-r}^{t+r}\int_{(t-r)/2}^{t-r}\left(\ln\frac{1}{\vep }+\be-\al\right)^{1-p}d\al d\be\\
\nonumber&\les&\<t-r\>^{-h}\int_{t-r}^{t+r}\left(\ln\frac{1}{\vep }+\be-(t-r)\right)^{2-p}d\be\\
\nonumber&\les&\<t-r\>^{-h}r\left(\ln\frac{1}{\vep }\right)^{2-p}\ll r\<t-r\>^{-h}.
\eneqa
With the help of \eqref{inte1}, \eqref{inte2}, and the fact that $\tanh (t+r)\le1$, we have
\beeq\label{case1}
J(t,r)\ll r\<t-r\>^{-h}\sim \<t-r\>^{-h}\tanh r\le\<t-r\>^{-h}\frac{\tanh r}{\tanh (t+r)}.
\eneq
\subsubsection{Case 2: ${1\le r\le t}$}
In this case, we have $\tanh r\sim 1$. Similar to \eqref{inte1}, as $h\in(1,p-2)$, we obtain
\beeqa
\label{inte3}&&\int_{t-r}^{t+r}\int_{-\be}^{(t-r)/2}\left(\ln\frac{1}{\vep }+\be-\al\right)^{1-p}\<\al\>^{-h}d\al d\be\\
\nonumber&\les&\int_{t-r}^{t+r}\left(\ln\frac{1}{\vep }+\be\right)^{1-p}d\be\\
\nonumber&\les&\left(\ln\frac{1}{\vep }+t-r\right)^{2-p}\ll\<t-r\>^{-h}\ .
\eneqa
For the  part with $\al\ge (t-r)/2$, 
similar to \eqref{inte2},
we have
\beeqa
\label{inte4}&&\int_{t-r}^{t+r}\int_{(t-r)/2}^{t-r}\left(\ln\frac{1}{\vep }+\be-\al\right)^{1-p}\<\al\>^{-h}d\al d\be\\
\nonumber&\les&\<t-r\>^{-h}\int_{t-r}^{t+r}\left(\ln\frac{1}{\vep }+\be-(t-r)\right)^{2-p}d\be\\
\nonumber&\les&\<t-r\>^{-h}\left(\ln\frac{1}{\vep }\right)^{3-p}\ll \<t-r\>^{-h}\ ,
\eneqa
as we are assuming $p>3$.
 
As for case 1, by \eqref{inte3}, \eqref{inte4}, we get
\beeq\label{case2}
J(t,r)\ll \<t-r\>^{-h}\sim \<t-r\>^{-h}\tanh r\le \<t-r\>^{-h}\frac{\tanh r}{\tanh (t+r)}.
\eneq

\subsubsection{Case 3: ${r>t}$}
In this case, we have $$\tanh (t+r)\le\tanh (2r)\les \tanh r,\ \be-\al\ge\be\ ,$$
and so
\beeqa
\label{inte5}J(t,r)&=&\int_{r-t}^{t+r}\int_{-\be}^{t-r}\left(\ln\frac{1}{\vep }+\be-\al\right)^{1-p}\<\al\>^{-h}d\al d\be\\
\nonumber&\le&\int_{r-t}^{t+r}\left(\ln\frac{1}{\vep }+\be\right)^{1-p}\int_{-\be}^{t-r}\<\al\>^{-h}d\al d\be\\
\nonumber&\les&\left(\ln\frac{1}{\vep }+r-t\right)^{2-p}\<r-t\>^{1-h}\ll\<r-t\>^{-h}\frac{\tanh r}{\tanh(t+r)}.
\eneqa
This completes the proof of \eqref{claim} and so is Lemma \ref{mainlem}.

\section{Formation of singularity}
In this section, we present the proof of Theorem \ref{thm-bu}, when
$F(u)=F_p(u)$ satisfies \eqref{nonlinearity}.
Since we will show blow up for any nontrivial data, we could set $\vep=1$ without loss of any generality.
 As an initial step, we give the local existence and uniqueness, for compactly supported $C^1\times C^0$ data.
\subsection{Local existence and uniqueness}\label{sec-sub-bu}
We give a sketch of the proof for $t\in [0,T]$ with certain sufficiently small $T\in (0,1]$.

 
  Assume that $u_0,u_1$ have their support in a ball $|x|\le r_0$, by \eqref{linear}, we have 
\beeqa\label{lin}
\overline{u^0}(t,r)\le
Ne^{-r}\chi_{r\le t+r_0}\le N \chi_{r\le t+r_0}
,
\eneqa for some $N>0$,
where $\chi$ is the characteristic function. Based on \eqref{lin}, we introduce an alternative norm
\beeqa\label{bunorm}
\|u\|=
\|  \ol u(t,r)\|_{L^\infty([0,T]\times \hth)}
\eneqa
and the complete metric space $$X_T=\{u\in C([0,T]\times \hth): \|u\|\le 2  N, \mathrm{supp}\ u(t)\subset \{r\le t+r_0\} \}\
 .$$
 As $F\in C^1$ with $F(0)=0=F^{\prime}(0)$, there exists $M>0$ such that
 $|F(u)-F(v)|\le M|u-v|$ and so
  $\ol{F(u)-F(v)}\le M\ol{u-v}$,
 for any $u, v\in X_T$.
 Thus, for any such $u, v\in X_T$,
 it follows that
\beeqa
\nonumber |LF(u)(t,x)-LF(v)(t,x)|&\le &\frac{1}{2\sinh r}\int_0^t\int_{|t-s-r|}^{t-s+r} \ol{F(u)-F(v)}(s,\la)\sinh \la d\la ds\\
\nonumber&\le&\frac{ M\|u-v\|}{2\sinh r}\int_0^t\int_{|t-s-r|}^{t-s+r} d\la ds\\
\nonumber&=&\frac{M\|u-v\|}{4 \sinh r}\int_{|t-r|}^{t+r}\int_{-\be}^{t-r} d\al d\be\ .
\eneqa
An elementary calculation tells us that
\beqa
\int_{|t-r|}^{t+r}\int_{-\be}^{t-r}d\al d\be\le 4t\min (t,r)\le 4tr\ .
\eeqa
Recalling $\sinh r\ge r$, if $MT\le 1/2$,
we see that
$$ 
|LF(u)(t,x)-LF(v)(t,x)|\le
\frac{Mtr \|u-v\| }{ \sinh r} \le MT\|u-v\| \le \frac{1}{2}\|u-v\| \ ,$$
which ensures that 
$$\|LF(u)-LF(v)\|\le  \frac{1}{2}\|u-v\|, \forall u,v\in X_T\ .$$
Then, it is clear that the map
$$(T u)(t,x)= u^0+LF(u)\ $$
is a contraction map on $X_T$, which ensures local existence and uniqueness.

\subsection{Blow-up of the solution}

Because of the convexity of $F(u)$, by \eqref{ie2} with $\vep =1$, we have
\beeqa\label{fml}
\wt{u}(t,r)=\wt{u^0}(t,r)+L\wt{F(u)}(t,r)\ge\wt{u^0}(t,r)+LF(\wt{u})(t,r)\ ,
\eneqa
which gives us
\beeqa\label{lfu}
\wt{u}(t,r)\ge\wt{u^0}(t,r)+\frac{1}{2\sinh r}\int_0^t\int_{|t-s-r|}^{t-s+r}F(\wt{u})(s,\la)\sinh \la d\la ds.
\eneqa
Let $R_{r,t}$ denote the domain of the integration
\beeqa\label{domain}
R_{r,t}=\{(\la,s):t-r<s+\la<t+r,s-\la<t-r,s>0\}
\eneqa
in the $(\la, s)$ plane, see Figure 1. 

\begin{figure}[H]
\centering
\includegraphics[width=0.8\textwidth]{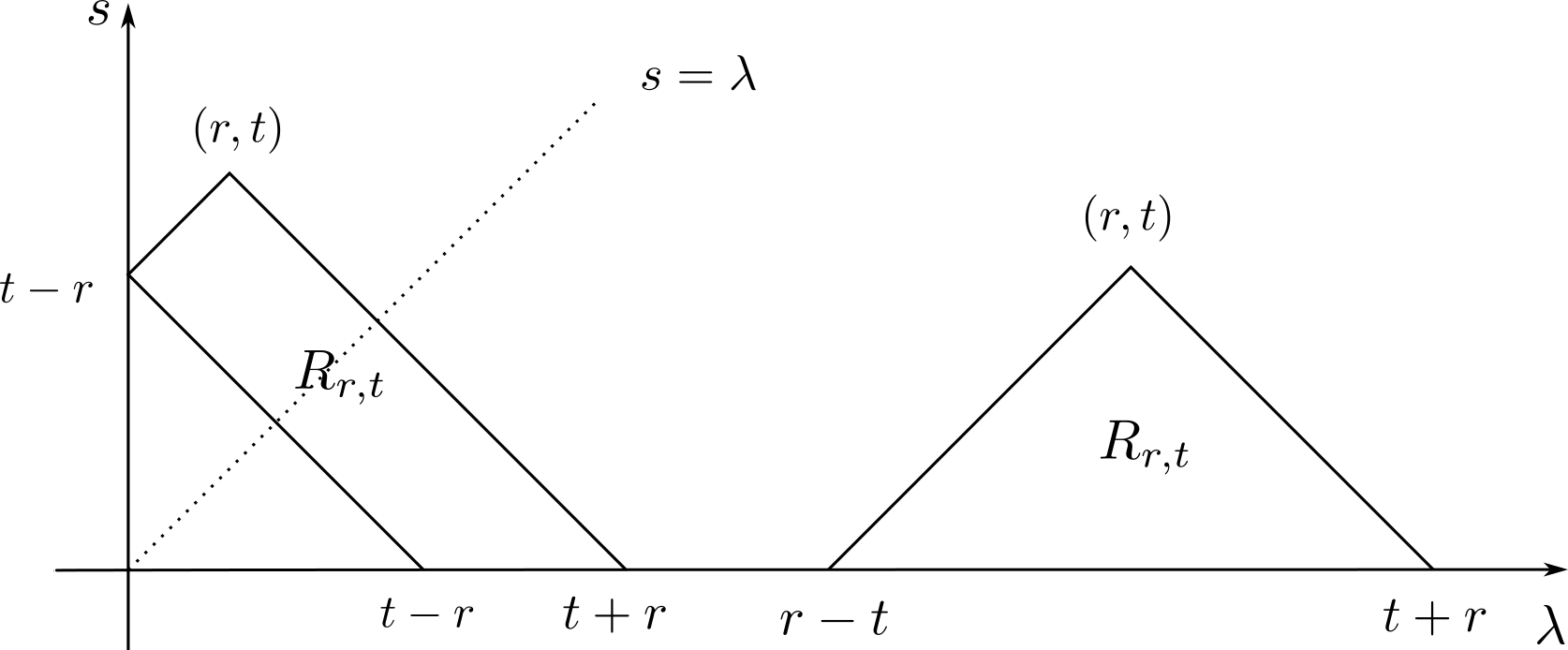}
\caption{Domain of the integration}\label{fig-1}
\end{figure}


The proof will be given by contradiction, for which we assume that there exists some nontrivial data $(u_0, u_1)\in C_c^1\times C_c$ so that the solution is global.

Let $t_0>0$ so that both the data are supported in  $\{x\in\hth: r\le t_0\}$. With the help of the sharp Huyghens' principle, we have $u^0=0$ in 
$\Gamma^{+}(O, t_0)$.
In the following, we shall prove that we must have \beeq\label{eq-claim-bu}\mathrm{supp}\ u\subset \Gamma^-(O,t_0)\ .\eneq
If this is true, then, by solving backward from $t=t_0+1$, the uniqueness result from
Subsection \ref{sec-sub-bu} tells us that $u \equiv 0$, which is clearly a contradiction to the nontrivial assumption on the data.

\subsection{Proof of  \eqref{eq-claim-bu} }
Suppose, by contradiction, that \eqref{eq-claim-bu} is not true, which means that there exists a point $(x_1,t_1)\notin \gm$ for which 
$$u(t_1,x_1)\neq0.$$
Set $t_2$ such that $t_2=t_1+|x_1|$, then $(O,t_2)\in \gp$. By \eqref{ie2} with $\vep =1$, the point $(x^1,t_1)$ lies inside the domain of integration of $L$ for $(t,x)=(t_2,O)$, which gives us
$$u(t_2,O)>\mu>0\ ,$$
for some $\mu>0$.
As $\wt{u}(t_2,0)=\wt{u}(t_2,O)>\mu$, by continuity, we can find a positive $\de>0$ so  that
\beeqa\label{eq-initial-lower}
\wt{u}(t, r)>\frac{\mu}2, \forall t\in [t_2, t_2+\de], r\le \de\ .
\eneqa

Before proceeding,
we introduce the following regions (see Figure 2)
\beeqa
\label{first}T&=&\{(\la,s):t_2+\de\le s+\la\le t_2+2\de,s-\la\le t_2\}\ ,\\
\label{second}S&=&\{(\la,s):t_2+2\de\le s+\la,t_2\le s-\la\le t_2+\de\}\ .
\eneqa

\begin{figure}[H]
\begin{tabular}{cc}
\begin{minipage}[t]{0.5\linewidth}
\centering
\includegraphics[width=0.8\textwidth]{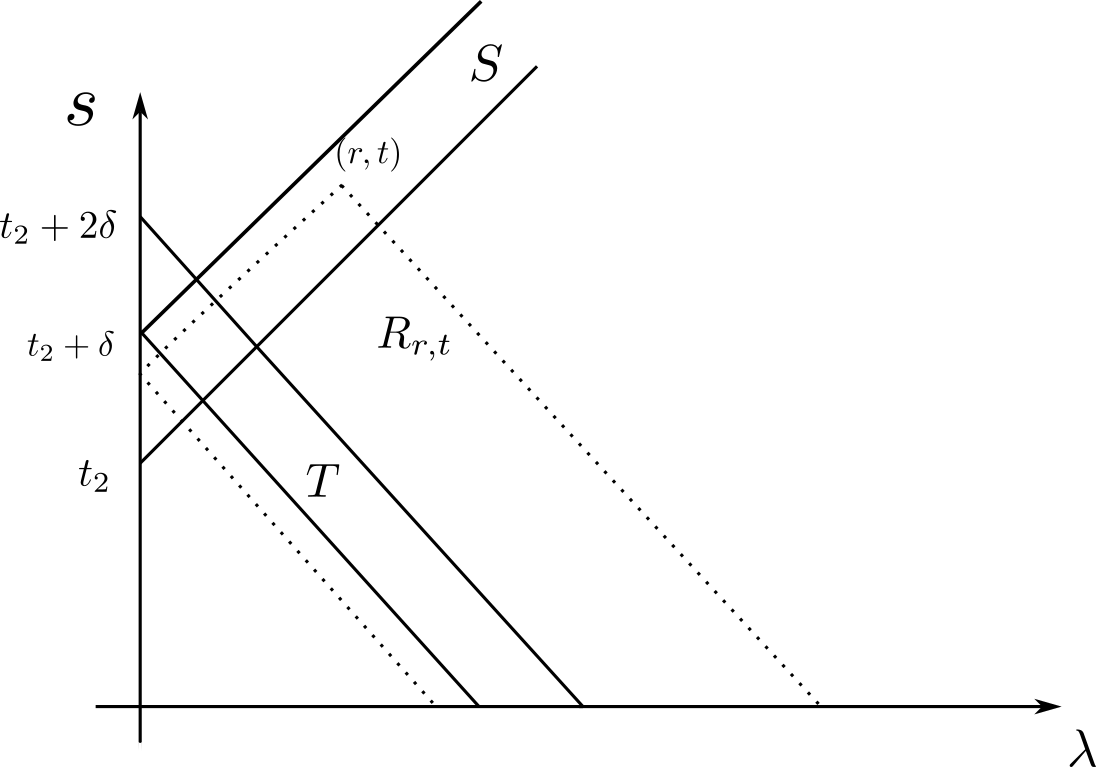}
\centerline{Figure 2}
\end{minipage}

\begin{minipage}[t]{0.5\linewidth}
\centering
\includegraphics[width=0.8\textwidth]{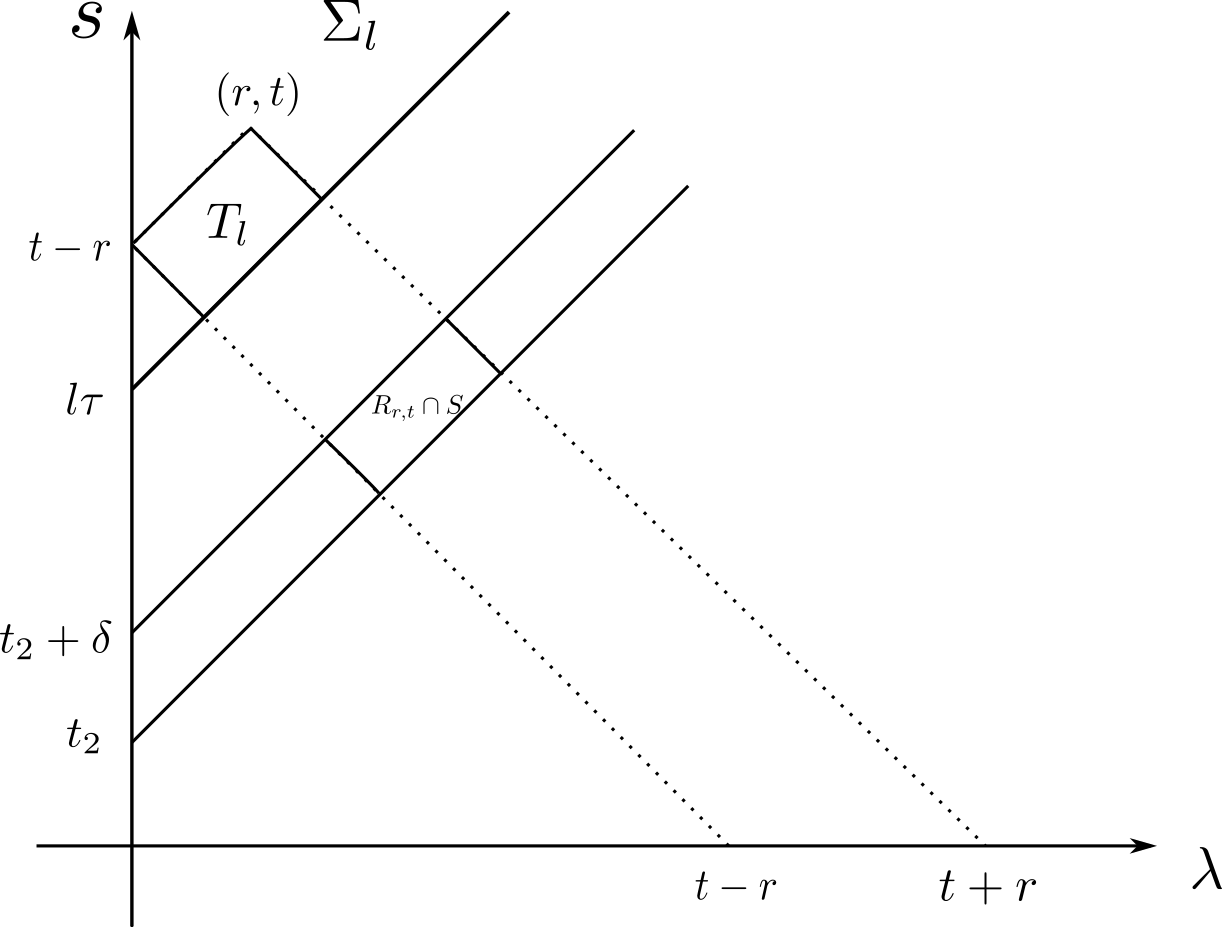}
\centerline{Figure 3}
\end{minipage}
\end{tabular}
\end{figure}

Observing that for any $(r,t)\in S$, we have
$T\subset R_{r,t}$.
Then
it follows from \eqref{lfu} and \eqref{eq-initial-lower} that for $(r,t)\in S$,
\beeqa\label{iter}
\wt{u}(t,r)\ge\frac{1}{2\sinh r}\int_{T}F(\wt{u})(s, \la)\sinh\la d\la ds\ge \frac{c_0}{\sinh r}\ ,
\eneqa
for some constant $0<c_0\ll1$, which is uniform in $(r,t)\in S$.

Recalling  \eqref{nonlinearity}, there exists $\de_0>0$ so that
\beeq\label{eq-nonlinearity2}
F(u)\ge \de_0\left(\ln\frac{1}{|u|}\right)^{1-p}|u|, \forall |u|<\de_0\ ;\ 
F(u)\ge \de_0|u|^{q}, \forall |u|>1/\de_0\  .\eneq
Without loss of generality, we could assume $c_0<\de_0\sinh (\de/2)$ so that, in view of \eqref{iter},
\beeq\label{eq-ite1}
F(\wt{u})(t,r)\ge\frac{ \de_0c_0}{\sinh r}\left(\ln\frac{\sinh r}{c_0}\right)^{1-p},\ 
\forall (r,t)\in S
\ .\eneq

\subsubsection{Improved lower bound}
To improve the lower bound, we introduce  the following regions for the $l$-th iteration, with $\tau=t_2+2\de$,
\beeqa
\label{third}\Si_l&=&\{(\la,s):s-\la>l\tau\}\ ,\\
\label{forth}T_l&=&\{(\la,s):t-r<s+\la<t+r,l\tau<s-\la<t-r\}\ .
\eneqa
See Figure 3 for an illustration.
Based on \eqref{eq-ite1} and \eqref{lfu}, for any $(r,t)\in\Si_1$, we could iterate once more to obtain
\beeqa
\nonumber\wt{u}(t,r)&\ge&\frac{1}{2\sinh r}\int_{R_{r,t}\cap S}F(\wt{u})(s,\la)\sinh \la d\la ds\\
\nonumber&\ge&\frac{\de_0 c_0}{2\sinh r}\int_{R_{r,t}\cap S}\left(\ln\left(\frac{\sinh\la}{c_0}\right)\right)^{1-p}d\la ds\\
\nonumber&\gtrsim&\frac{1}{\sinh r}\int_{t-r}^{t+r}\int_{t_2}^{t_2+\de}\left(\ln\left(\frac{1}{c_0}\right)+\be\right)^{1-p}d\al d\be\\
\nonumber&\gtrsim&\frac{r}{\sinh r}\left(\ln\left(\frac{1}{c_0}\right)+t+r\right)^{1-p},
\eneqa
which means that exists $c_1\in (0, c_0]$
\beqa
\wt{u}(t,r)\ge c_1\frac{r}{\sinh r}\left(\ln\left(\frac{1}{c_1}\right)+t+r\right)^{1-p}.
\eeqa

Suppose more generally that we have established an inequality of the form, 
\beeqa\label{form}
\wt{u}(t,r)\ge c_l\frac{r}{\sinh r}\left(t+r+\ln\left(\frac{1}{c_l}\right)\right)^{-b_l}(t-r)^{a_l},\  \forall\ (r,t)\in\Si_l,
\eneqa
for some   $b_l>0$, $c_l>0$ and
$a_l\in [ 0, b_l]$. Obviously, as $a_l-b_l\le0$, we could possibly take sufficiently small $c_l$ such that the lower bound is less than $\de_0$ and we could use the logarithmic term to iterate.
Based on \eqref{form},  a further iteration yields
\beqa
\nonumber\wt{u}(t,r)&\ge&\frac{1}{2\sinh r}\int_{T_l}F(\wt{u})(s,\la)\sinh \la d\la ds\\
\nonumber&\gtrsim&\frac{1}{\sinh r}\int_{t-r}^{t+r}\int_{l\tau}^{t-r}\left(\ln\left(\frac{1}{c_l}\right)+\be\right)^{1-p}\left(\be+\ln\left(\frac{1}{c_l}\right)\right)^{-b_l}\al^{a_l}(\be-\al)d\al d\be\\
\nonumber&\ge&\frac{1}{\sinh r}\int_{t-r}^{t+r}\left(\ln\left(\frac{1}{c_l}\right)+\be\right)^{-b_l+1-p}\int_{l\tau}^{t-r}(\al-l\tau)^{a_l}(t-r-\al)d\al d\be\\
\nonumber&\gtrsim&\frac{1}{\sinh r}(t-r-l\tau)^{a_l+2}\int_{t-r}^{t+r}\left(\ln\left(\frac{1}{c_l}\right)+\be\right)^{-b_l+1-p}d\be\\
\label{liter}&\gtrsim&\frac{r}{\sinh r}(t-r-l\tau)^{a_l+2}\left(\ln\left(\frac{1}{c_l}\right)+t+r\right)^{-b_l+1-p}\ ,
\eeqa
for any $(r,t)\in \Si_{l}$. If we assume $(r,t)\in \Si_{l+1}$, we get
$t-r-l\tau\sim t-r$ and so is 
 \eqref{form} with $a_{l+1}=a_l+2$, $b_{l+1}=p-1+b_l$ and some $c_{l+1}\in (0, c_l)$.
 
 By induction, 
 with
$a_1=0$ and $b_1=p-1$, it is clear that 
\eqref{form} with $l=j$ could be boosted to 
\eqref{form} with $l=j+1$, as long as $a_j\le b_j$.
As $a_j=2j-2$, $b_j=(p-1)j$, the procedure breaks in finite steps, if $1<p<3$. To be more specific, with $l_0:=\left[\frac{2}{3-p}\right]+1$, we have, for some $c>0$,
\beeqa\label{eq-postibd}
\wt{u}(t,r)\ge c \frac{r}{\sinh r}\left(t+r+\ln\left(\frac{1}{c}\right)\right)^{-l_0(p-1)}(t-r)^{2l_0-2},\ 
\eneqa
for all $(r,t)\in\Si_{l_0}$.
Here, $-l_0(p-1)+2l_0-2=l_0(3-p)-2>0$.

\subsubsection{Further improved lower bound}
Equipped with the lower bound \eqref{eq-postibd}, which blows up at infinity, we could exploit the power type nonlinearity to show blow up in finite time.

Let
$A_0=l_0(3-p)-2>0$,  and
$$
Y_0=\{(\la,s): \la\le1,s\ge T\}\ ,\ 
Y=\{(\la,s): \la\le1,  s-\la\ge T, s+\la\le T+2\}\ ,
$$
where $T>\left(\frac{1}{c}\right)^{1/A_0}$ is a constant to be determined later so that
$Y_0\subset\Si_{l_0}$ (see Figure 4).

Restricted to $Y_0$,
the lower bound
 \eqref{eq-postibd} tells us that
\beeqa\label{fstep}
\wt{u}(t,r)\ge \tilde c\ t^{A_0}\ge \tilde c\ T^{A_0},
\eneqa
for some $\tilde c>0$. We shall require $\tilde c\ T^{A_0}>\de_0^{-1}$ so that we could
apply the power type nonlinearity \eqref{eq-nonlinearity2}:
$$F(\wt{u})\ge \de_0 |\wt{u}|^q,\ \forall (r,t)\in Y_0\ .$$

\begin{figure}[H]
\centering
\includegraphics[width=0.4\textwidth]{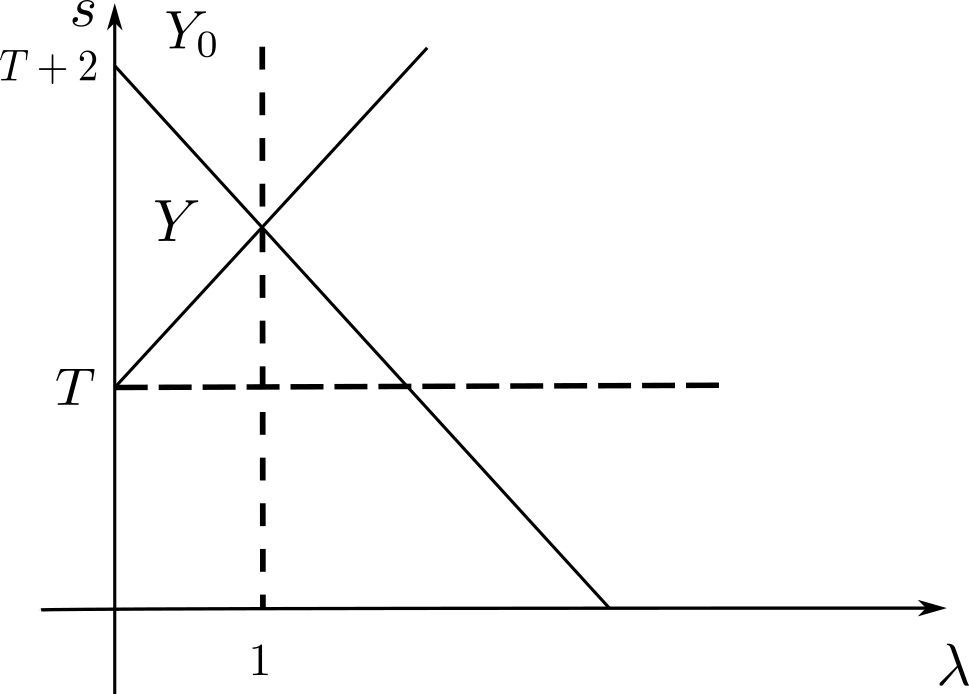}
\centerline{Figure 4}
\end{figure}
As before, we would like to boost \eqref{fstep} to illustrate blow up in finite time.
For such purpose, suppose that we have a lower bound of the following form
\beeqa\label{newform}
\wt{u}(t,r)\ge D T^{A} (t-r-T)^{B}\ ,
\eneqa
 for $(r,t)\in Y\subset Y_0$.
Then
\begin{eqnarray*}
\wt{u}(t,r)&\ge&\frac{1}{2\sinh r} \int_{R_{r,t}\cap Y}F(\wt{u})(s,\la)\sinh \la d\la ds\\
&\ge & \frac{\de_0}{2\sinh r}
\int_{R_{r,t}\cap Y}|\wt{u}(s,\la)|^q\sinh \la d\la ds\\
&\ge &\frac{\de_0 }{8\sinh r}
\int_{t-r}^{t+r}\int_{T}^{t-r} D^qT^{Aq} (\al-T)^{B q}(\be-\al)d\al d\be
\\
 &=&
 \frac{\de_0 D^qT^{Aq}}{8\sinh r}
\int_{t-r}^{t+r}\int_{T}^{t-r}(\al-T)^{B q}((\be-T)-(\al-T))d\al d\be
\\
 &\ge& \frac{\de_0  D^q r}{4
 (B q+1)(B q+2)\sinh r}T^{Aq}(t-r-T)^{Bq+2}\\& \ge&
 \frac{\de_0  D^q }{8
 (B q+2)^2}T^{Aq}(t-r-T)^{Bq+2}\ ,
\end{eqnarray*}
as $\sinh r\in [r, 2r]$ for $r\in [0,1]$.

As we know \eqref{newform} with $D=\tilde c$, $A=A_0$, $B=0$,
by induction, we have
\eqref{newform} with $D=D_m$, $A=A_m$ and $B=B_m$
for any $m\ge 0$, provided that
$D_0=\tilde c$, $B_0=0$,
and
\beeqa\label{m+1}
A_{m+1}=A_m q, B_{m+1}=B_m q+2, D_{m+1}
=
 \frac{\de_0  D_m^q}{8 (B_mq+2)^2}=
  \frac{\de_0  D_m^q}{8 B_{m+1}^2}
 \ .
\eneqa
  Solving \eqref{m+1} yields for $m\ge1$
\beeqa\label{law1}
A_m=A_0 q^m,\  B_m=2\frac{q^m-1}{q-1}\le 2mq^{m-1},\  D_m\ge
\frac{\de_0D_{m-1}^q}{32 m^2q^{2(m-1)}},
\eneqa
and thus
\beqa
D_m\ge \exp\left[q^m\left(\ln D_0-\sum_{j=0}^{m-1}\frac{2\ln(j+1)+2j\ln q-\ln\frac{\de_0}{32}}{q^{j+1}}\right)\right].
\eeqa
Let
\beeqa\label{const}
E=\ln D_0-\sum_{j=0}^{\infty}\frac{2\ln(j+1)+2j\ln q-\ln\frac{\de_0}{32}}{q^{j+1}}\ ,
\eneqa
for which the convergence is ensured by $q>1$,
it follows that, for any $m\ge 1$,
\beeqa\label{law2}
D_m\ge \exp(Eq^m)\ .
\eneqa

Then by \eqref{law1},\eqref{law2},\eqref{const}, we have for $(r,t)\in Y$ and sufficiently large $m$
\beqa
\wt{u}(t,r)\ge \exp\left[q^m\left(E+A_0\ln T+\frac{2}{q-1}\ln(t-r-T)\right)\right](t-r-T)^{-\frac{2}{q-1}}.
\eeqa
Let $r=0$ and $t=T+2$, 
the term
$E+A_0\ln T+\frac{2}{q-1}\ln(t-r-T)
$ is positive, for sufficiently large $T$. Then, for such $T$, it follows that $u(T+2,O)
=\wt{u}(T+2,r)\to\infty$ as $m\to\infty$, which is the desired contradiction.

\section{Appendix}
In this section, we would like to present an elementary proof for the formula \eqref{mean}. By the coordinates \eqref{varitrans} in Section \ref{pre}, a simple computation leads to the following connection between wave operators on  hyperbolic space and that on Minkowski space:
$$\square=\partial_{\tau}^2-\De_{\R^3}=e^{-3t}(\partial_t^2-(\De_{\hth}+1))e^t=e^{-3t}\square_{\hth}e^t\ .$$
Without loss of generality, we need only to prove
the formula \eqref{mean} for $x=O$, for which we use the geodesic polar coordinates $(r,\omega)$.

Let
$u=I(t,x,u_1)$, we know that it satisfies
$$(\partial_t^2-\partial_r^2-\frac{2}{\tanh r}\partial_r-
\frac{1}{(\sinh r)^2}\Delta_\omega 
-1)u=0, u(0,x)=0, u_t(0,x)=u_1\ .$$
Taking spherical average on $S_1(O)$,  we see that
$U(t,r)=M^r(u(t,\cdot))(O)$ satisfies
$$(\partial_t^2-\partial_r^2-\frac{2}{\tanh r}\partial_r
-1)U=0, U(0,r)=0, \pt U (0,r)=(M^r u_1)(O):=G(r)\ .$$

Let $W(\tau,s)=e^{-t} U(t,r)$ with $\tau^2-s^2=e^{2t}$, $s/\tau=\tanh r$,  we have
\beqa
\begin{cases}
\square W=e^{-3t}(\partial_t^2-\partial_r^2-\frac{2}{\tanh r}\partial_r-1)U=0\\
W|_{t=0}=W(\<s\>,s)=0,\ W_t|_{t=0}
=G(\tanh^{-1}\frac{s}{\<s\>})
\end{cases},
\eeqa
where $\partial_t=\<s\>\partial_{\tau}+s\partial_s$, $\partial_r=s\partial_{\tau}+\<s\>\partial_s$ when $t=0$. On the hyperbolic space $t=0$, we see that  $\partial_{\tau}=\<s\>\partial_t-s\partial_r$, $\partial_s=\<s\>\partial_r-s\partial_t$ and thus 
\beeq\label{eueq}
\left\{\begin{array}{l}
\square W=(\partial_{\tau}^2-\partial_s^2)(sW)=0\\
W(\<s\>,s)=0,\ W_{\tau}(\<s\>,s)=(\<s\>W_t-sW_r)|_{t=0}=\<s\>G(\tanh^{-1}\frac{s}{\<s\>})\ .
\end{array}\right.
\eneq

Let $\al=\tau-s, \be=\tau+s$, $Z(\al,\be)=sW$,  and
$\la=\ln (\<s\>+s)$,  we can transform \eqref{eueq} further to the following form
\beeq\label{euequ}
\left\{\begin{array}{l}
\partial_{\al}\partial_{\be}Z=0\\
Z(e^{-\la},e^{\la})=0,\ Z_{\be}(e^{-\la},e^{\la})=\hf(\partial_{\tau}+\partial_s)(sW)=\frac{\sinh \la}{2e^{\la}}G(\la)\ .
\end{array}\right.
\eneq

In view of the d'Alembert's formula, as well as the fact that $Z|_{s=0}=0$, we obtain for $\al=\tau-s=e^{-\mu}$, $\be=\tau+s=e^{\nu}$ ($\mu\in\R,\ \nu\ge0$)
$$Z(e^{-\mu},e^{\nu})=\int_{e^{|\mu|}}^{e^{\nu}}\frac{\sinh \la}{2e^{\la}}G(\la)d(e^{\la})=\hf\int_{|\mu|}^{\nu}G(\la)\sinh \la d\la\ .$$
Finally, as $e^{2t}=\tau^2-s^2=e^{\nu-\mu}$, $e^{2r}=\frac{\tau+s}{\tau-s}=e^{\nu+\mu}$, we have
\beeq\label{radialsolu}
U(t,r)=\frac{e^t}{s}Z(e^{-\mu},e^{\nu})=\frac{1}{2\sinh r}\int_{|t-r|}^{t+r}G(\la)\sinh\la d\la.
\eneq
For $t>0$, we have $u(t,0)=
U(t,0)=\lim_{r\to0}U(t,r)=(\sinh t) G(t)=(\sinh t)(M^t u_1)(O)$, this completes the proof of  \eqref{mean}.

By the way, we remark that the formula \eqref{radialsolu} and \eqref{mean} tells us that, for radial functions $u_1$, we have the following formula for the spherical average:
\beeq\label{eq-sph-ave}(M^t u_1)(r)
=\frac{1}{2 \sinh t \sinh r}\int_{|t-r|}^{t+r}u_1(\la)\sinh\la d\la\ .\eneq
 
 \subsection*{Acknowledgment}
The first author would like to thank Professor Vladimir Georgiev for proposing the problem of logarithmic nonlinearity,
during the ``Waseda Workshop on Partial Differential Equations 2019" in Waseda University, where the first author gave a talk on the wave equations on hyperbolic spaces with power type nonlinearity.

\end{document}